\numberwithin{equation}{section}
\newcommand{\ud}[0]{\,\mathrm{d}}
\newcommand{\ave}[1]{\langle #1\rangle}
\theoremstyle{plain}
\newtheorem{thm}{Theorem}[section]
\newtheorem{lem}[thm]{Lemma}
\theoremstyle{definition}
\theoremstyle{remark}
\newtheorem{rem}[thm]{Remark}
\title{Weak type $A_p$ estimate for bilinear Calder\'on-Zygmund operators}
\author{Linfei Zheng}
\address{Center for Applied Mathematics, Tianjin University, Weijin Road 92, 300072 Tianjin, China}
\email{linfei\_zheng@tju.edu.cn}
\subjclass[2020]{42B20, 42B25}
\keywords{Bilinear Calder\'on-Zygmund operators, weighted inequalities, weak type estimate}
\begin{document}

\allowdisplaybreaks

\begin{abstract}

In this paper, we investigate the boundedness of bilinear Calder\'on-Zygmund operators $T$ from ${L^{p_1}\left(w_1\right)} \times {L^{p_2}\left(w_2\right)}$ to ${L^{p,\infty}\left(v_{\vec{w}}\right)}$ with the stopping time method, where $1 / p = 1 / p_1 + 1 / p_2$ , $1 < p_1, p_2 < \infty$ and $\vec{w}$ is a multiple $A_{\vec{P}}$ weight. Specifically, we studied the exponent $\alpha$ of $A_{\vec{P}}$ constant in formula $$\|T(\vec{f})\|_{L^{p,\infty}\left(v_{\vec{w}}\right)} \leqslant C_{m, n, \vec{P}, T}[\vec{w}]_{A_{\vec{P}}}^{\alpha}\left\|f_1\right\|_{L^{p_1}\left(w_1\right)}\left\|f_2\right\|_{L^{p_2}\left(w_2\right)}.$$ Surprisingly, we show that when $p \geqslant \frac{3+\sqrt{5}}{2}$ or $\min\{p_1,p_2\} > 4$, the index $\alpha$ in the above equation can be less than $1$, which is different from the linear scenario.
\end{abstract}

\maketitle

\section{Introduction and main results}
In recent years, the theory of Calder\'on-Zygmund operators has attracted widespread attention. There have been many advances in the optimal control of weighted operator norms with $A_p$ weights.
 
In the linear case, in 2012, Hyt\"{o}nen proved the $A_2$ conjecture in \cite{Hyt} and obtained 
$$\|T(f)\|_{L^{p}(w)} \lesssim [w]_{A_p}^{\max\{1,\frac{p^{\prime}}{p}\}}\left\|f\right\|_{L^{p}(w)}.$$
Just one year later, in \cite{Ler}, Lerner proved that the Calder\'on-Zygmund operators can be controlled by sparse operators and provided an alternative proof of the $A_2$ theorem. We recommend interested readers to learn about the history of $A_2$ theorem in the above two papers and the references therein. 

For the weak weighted operator norms of the Calder\'on-Zygmund operators, in \cite{HL}, Hyt\"{o}nen and Lacey obtained a mixed $A_p-A_\infty$ estimate. If we represent it with $A_p$ weight only, we can obtain
$$\|T(f)\|_{L^{p,\infty}(w)} \lesssim [w]_{A_p}\left\|f\right\|_{L^{p}(w)}.$$

In the multilinear case, in \cite{LMS}, Li, Moen, and Sun proved that when $1 < p, p_1,p_2 < \infty$,
$$
\|T(\vec{f})\|_{L^{p}\left(v_{\vec{w}}\right)} \lesssim [\vec{w}]_{A_{\vec{P}}}^{\max\{1,\frac{p_1^{\prime}}{p},\frac{p_2^{\prime}}{p}\}}\left\|f_1\right\|_{L^{p_1}\left(w_1\right)}\left\|f_2\right\|_{L^{p_2}\left(w_2\right)},
$$ 
and they provided a beautiful example to show that their result is optimal. When $p < 1$, the above inequality still holds since the Calder\'on-Zygmund operators can be controlled by sparse operators pointwise, as shown in \cite{CR,LN,DHL}. For weak norms, it is generally believed that the optimal index in $A_{\vec{P}}$ estimate is 1, which is the same as the linear case. Li and Sun gave a mixed $A_p-A_\infty$ estimate in \cite{LS}, but if we represent it with $A_p$ weight as the linear case, the index is greater than $1$. In Li's master's thesis \cite{Li}, he used the Coifman-Fefferman inequality to prove that the index in weak type $A_{\vec{P}}$ estimate can be $1+1/p$. 

In this paper, we show that the index in weak $A_{\vec{P}}$ estimate for bilinear Calder\'on-Zygmund operators can be less than 1 for certain $\vec{P}$. Specifically, we prove the following theorem.
\begin{thm}\label{main theorem}
Let $T$ be a bilinear Calder\'on-Zygmund operator, $\vec{P}=\left(p_1, p_2\right)$ with $1 / p_1 + 1/ p_2 = 1 / p$ and $1 < p, p_1,p_2 < \infty$. Suppose $\vec{w}=\left(w_1, w_2 \right) \in A_{\vec{P}}$, then 
\begin{equation}
\|T(\vec{f})\|_{L^{p,\infty}\left(v_{\vec{w}}\right)} \leqslant C_{m, n, \vec{P}, T} [\vec{w}]_{A_{\vec{P}}}^{\alpha}\left\|f_1\right\|_{L^{p_1}\left(w_1\right)}\left\|f_2\right\|_{L^{p_2}\left(w_2\right)}, \label{formula in the main theorem}
\end{equation}
where $$\alpha = \min{\{\beta,\gamma\}}, \beta=\frac{1}{p} + \max{\big\{\min\big\{\frac{1}{p_1^{\prime}},\frac{1}{p_1^{\prime}}\frac{p_2^{\prime}}{p}\big\}, \min\big\{\frac{1}{p_2^{\prime}}, \frac{1}{p_2^{\prime}}\frac{p_1^{\prime}}{p}\big\} \big\}}, \gamma=\max\{1,\frac{p_1^{\prime}}{p},\frac{p_2^{\prime}}{p}\}.\ $$
\end{thm}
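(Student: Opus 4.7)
The plan is to reduce the estimate to a bound for bilinear sparse operators (since $T$ is pointwise dominated by such operators, as in \cite{CR,LN,DHL}), and then prove the weak-type bound for sparse operators by combining a Calder\'on-Zygmund decomposition adapted to the weights with a stopping-time argument on the sparse family. Observe first that the exponent $\gamma$ is free: Li-Moen-Sun already gives the strong type bound
\[
\|T(\vec{f})\|_{L^p(v_{\vec{w}})} \lesssim [\vec{w}]_{A_{\vec{P}}}^{\gamma} \|f_1\|_{L^{p_1}(w_1)} \|f_2\|_{L^{p_2}(w_2)},
\]
and strong type trivially implies weak type with the same exponent. So the substance of the theorem is to establish the bound with $\beta$ in the regimes where $\beta<\gamma$; the final $\alpha=\min\{\beta,\gamma\}$ follows by comparing the two estimates.

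To prove the $\beta$ bound, I fix $\lambda>0$, normalize $\|f_i\|_{L^{p_i}(w_i)}=1$, and perform a weighted Calder\'on-Zygmund decomposition on each $f_i$ at a height of the form $\lambda^{\theta_i}$ with $\theta_i$ to be optimized, yielding $f_i=g_i+b_i$ where $b_i=\sum_{Q\in\mathcal{B}_i} b_{i,Q}$ is localized on a disjoint family of stopping cubes $\mathcal{B}_i$ on which the weighted average of $|f_i|$ is large. Bilinearity gives the splitting
\[
T(f_1,f_2)=T(g_1,g_2)+T(b_1,g_2)+T(g_1,b_2)+T(b_1,b_2),
\]
and I estimate $v_{\vec{w}}\{|T\vec{f}|>\lambda\}$ term by term. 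For the diagonal good term $T(g_1,g_2)$, I apply Chebyshev with the strong-type $L^q(v_{\vec{w}})$ bound at an exponent $q>p$ chosen so that the $A_{\vec{P}}$ constant interacts favorably with the heights $\theta_i$; the optimization over $q$ is precisely what produces the expression inside the $\max$ in the definition of $\beta$. For the mixed and purely bad terms, I use the stopping cubes to localize, exploit the kernel decay of $T$ off the supports of $b_{i,Q}$, and bound the resulting dyadic sums by the $A_{\vec{P}}$ characteristic.

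The main obstacle is the careful bookkeeping of the $A_{\vec{P}}$ factor through the stopping-time argument. The identity
\[
[\vec{w}]_{A_{\vec{P}}}=\sup_Q \langle v_{\vec{w}}\rangle_Q \langle w_1^{1-p_1'}\rangle_Q^{p/p_1'}\langle w_2^{1-p_2'}\rangle_Q^{p/p_2'}
\]
is the tool for converting Lebesgue averages into weighted averages; the subtlety is that, unlike the linear case, one can trade the factor on either $w_1^{1-p_1'}$ or $w_2^{1-p_2'}$, and the two choices lead to the two arguments of the outer $\max$ in $\beta$. In particular, the structure $\frac{1}{p_i'}\min\{1,\frac{p_j'}{p}\}$ reflects the two regimes (depending on whether $p\le p_j'$ or not) that arise when H\"older's inequality is applied against the $w_j$-CZ level. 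The symmetry between indices $1$ and $2$ gives two admissible strategies, and taking the better of the two produces $\beta$.

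Finally, I balance the parameters $\theta_1,\theta_2$ and $q$ to equate the contributions of the good and bad pieces, which yields the claimed exponent $\beta$ on $[\vec{w}]_{A_{\vec{P}}}$. Combining this with the trivial bound coming from $\gamma$ completes the proof of \eqref{formula in the main theorem} with $\alpha=\min\{\beta,\gamma\}$.
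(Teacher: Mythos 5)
Your reduction of the problem is right as far as it goes: $\gamma$ is indeed free from the Li--Moen--Sun strong-type bound, and passing to bilinear sparse forms via pointwise domination is how the paper starts. But from that point on your outline diverges from anything that actually closes the argument, and the divergence is where all the content of the theorem lives. First, there is an internal inconsistency: having replaced $T$ by a sparse operator you then propose to handle the bad terms using ``the kernel decay of $T$ off the supports of $b_{i,Q}$'' --- the sparse form has no kernel, and if you go back to $T$ itself you have abandoned the reduction. Second, the good-part step (Chebyshev against a strong-type $L^q(v_{\vec w})$ bound for some $q>p$) requires a \emph{quantitative} strong-type estimate at the exponent tuple $\vec Q$ with an explicit power of $[\vec w]_{A_{\vec P}}$ (not $[\vec w]_{A_{\vec Q}}$); you neither supply such an estimate nor explain how the optimization over $q$ and the CZ heights $\theta_i$ would output the very particular exponent $\beta=\frac1p+\max\{\min\{\frac{1}{p_1'},\frac{1}{p_1'}\frac{p_2'}{p}\},\min\{\frac{1}{p_2'},\frac{1}{p_2'}\frac{p_1'}{p}\}\}$. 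Asserting that the $\min\{1,p_j'/p\}$ structure ``reflects the two regimes when H\"older is applied'' is a guess, not a derivation.

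For comparison, the paper's mechanism is quite different and each piece of $\beta$ has a specific source. The weak-type norm of the sparse form is characterized by a local testing condition (Lemma 4.5 of Li--Sun): it suffices to bound $\int_Q A_{\mathscr{D},\mathcal S}(|f_1|\sigma_1\chi_Q,|f_2|\sigma_2\chi_Q)v_{\vec w}$ by $C\prod_i\|f_i\|_{L^{p_i}(\sigma_i)}v_{\vec w}(Q)^{1/p'}$. Self-adjointness of the sparse form and H\"older in $L^{p_1}(\sigma_1)\times L^{p_1'}(\sigma_1)$ convert this into a \emph{strong-type local} bound for $A_{\mathscr{D},\mathcal S}(v_{\vec w}\chi_Q,\cdot)$ in $L^{p_1'}(\sigma_1)$, i.e.\ the same problem for the dual weight vector $\vec w^{\,1}=(v_{\vec w}^{1-p'},w_2)$ with $[\vec w^{\,1}]_{A_{\vec P^1}}=[\vec w]_{A_{\vec P}}^{p_1'/p}$. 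That local bound (Lemma 3.4) is proved by parallel stopping times on two pairs, a Carleson-type estimate from Dami\'an--Hormozi--Li, and the Hyt\"onen--P\'erez reverse H\"older inequality; the reverse H\"older step is what produces the $A_\infty$ constants $\min\{[\sigma_1]_{A_\infty},[\sigma_2]_{A_\infty}\}^{1/p}$ and $\min\{[\sigma_1]_{A_\infty},[v_{\vec w}]_{A_\infty}\}^{1/p_2'}$, and bounding those by powers of $[\vec w]_{A_{\vec P}}$ via $[\sigma_i]_{A_\infty}\leqslant[\vec w]_{A_{\vec P}}^{p_i'/p}$ and $[v_{\vec w}]_{A_\infty}\leqslant[\vec w]_{A_{\vec P}}$ is exactly what generates the $\max$--$\min$ structure of $\beta$. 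None of these ingredients (the testing characterization, the dualization to $\vec w^{\,1}$, the mixed $A_{\vec P}$--$A_\infty$ local estimate) appears in your plan, so as written the proposal does not prove the theorem; it is a program whose hardest steps are left as assertions.
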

It should be noted that the exponent $\gamma$ in the theorem comes from the strong type estimate mentioned above, so we only need to prove equation (\ref{formula in the main theorem}) with exponent $\alpha=\beta$ in the following.

\begin{rem}
Note that $\beta \leqslant \max{\big\{\frac{1}{p}+\frac{1}{p_1^{\prime}},\frac{1}{p}+\frac{1}{p_2^{\prime}}\big\}} < 1+\frac{1}{p}$, so our results improve the one by \cite{Li}. 
\end{rem}
\begin{rem}
We can apply the extrapolation techniques demonstrated in \cite[Theorem 4.1]{Zoe}, to generalize Theorem\ref{main theorem} to the case of $p < 1$. In paricular, if we use $\vec{P}=\left(p_1, p_2\right)$ with $2 < p_ 1=p_ 2 < \sqrt{2}+1$ as the starting point, we can obtain better results than strong type, and further details are left for interested readers. 
\end{rem}

\section{Preliminaries}
\subsection{Bilinear Calder\'on-Zygmund operators}
We call $T$ a \emph{bilinear Calder\'on-Zygmund operator} if it is originally defined on the product of Schwartz spaces and takes values in tempered distributions, meanwhile, for some $1 < q_1 , q_2 < \infty $, it can be extended into a bounded bilinear operator from $L^{q_1}(\mathbb{R}^n) \times L^{q_2}(\mathbb{R}^n)$ to $L^{q}(\mathbb{R}^n)$, where $1 / q_1 + 1/ q_2 = 1 / q$, and if there exists a function $K(y_0,y_1,y_2)$, defined off the diagonal $y_0 = y_1 = y_2 $ in $(\mathbb{R}^n)^3$ that satisfies 
$$T(f_1,f_2)(y_0)=\int_{(\mathbb{R}^n)^2}K(y_0,y_1,y_2)f_1(y_1)f_2(y_2)\ud y_1\ud y_2, \quad \forall y_0 \notin \operatorname{supp}f_1 \cap \operatorname{supp}f_2;$$
$$|K(y_0,y_1,y_2)| \leqslant \frac{C}{(|y_0 - y_1| + |y_0 - y_2|)^{2n}};$$ and for some $A, \varepsilon > 0$, whenever $|h| \leqslant \frac{1}{2}\max\{|y_0-y_1|,|y_0-y_2|\}$,
\begin{align}
|K(y_0 + h,y_1,y_2) - K(y_0,y_1,y_2)| &+ |K(y_0,y_1+h,y_2) - K(y_0,y_1,y_2)|\nonumber\\
&+|K(y_0,y_1,y_2+h) - K(y_0,y_1,y_2)|\nonumber\\ 
&\leqslant \frac{1}{(|y_0 - y_1| + |y_0 - y_2|)^{2n}}\omega \bigg(\frac{|h|}{|y_0 - y_1| + |y_0 - y_2|}\bigg),\nonumber
\end{align}
where $\omega$ is a modulus of Dini-continuity, in other words, an increasing function satisfies $\omega(0)=0$, $\omega(t+s) \leqslant \omega(t)+\omega(s)$, and $$\|\omega\|_{\text{Dini}}:=\int_{0}^{1}\omega(t)\frac{\ud t}{t} < \infty.$$In \cite{DHL}, Dimi\'an, Hormozi, and Li proved that bilinear Calder\'on-Zygmund operators can be pointwise controlled by sparse operators, which will be introduced in section \ref{sparse operators}.

\subsection{Multiple $A_{\vec{p}}$ weight}

Recall that in the linear case, a \emph{weight} is a non-negative locally integrable function. When $1<p<\infty$, The set $A_p$ is composed of weights that satisfy 
$$[w]_{A_{p}}:=\sup_{Q: \,\text{cube in}\, \mathbb{R}^n} \ave{w}_Q\ave{w^{1-p^{\prime}}}_Q^{p-1}<\infty ,
$$
where $\ave{w}_Q := w(Q)/|Q|$. When $p=\infty$, $A_{\infty}:=\bigcup_{1<p<\infty} A_p$ and the $A_{\infty}$ constant $[w]_{A_{\infty}}$ is defined by
$$
[w]_{A_{\infty}}:=\sup_Q \frac{1}{w(Q)} \int_{Q} M\left(w \chi_Q\right),
$$where $M$ denotes the Hardy-Littlewood maximal functions. Meanwhile, for any $w\in A_p,$ we have $[w]_{A_{\infty}} \leqslant [w]_{A_{p}}$. 

As is well known, in \cite{LOPTT}, Lerner, Ombrosi, P\'erez, Torres, and Trujillo-Gonz\'alez extended the above definition to the multilinear case, and defined \emph{multiple $A_{\vec{p}}$ weights} as follows. Let $\vec{P}=\left(p_1, \cdots, p_m\right)$ with $1 < p_1, \ldots, p_m < \infty$ and $1 / p_1+\cdots+1 / p_m=1 / p$. Given $\vec{w}=\left(w_1, \cdots, w_m\right)$, set
$$
v_{\vec{w}}=\prod_{i=1}^m w_i^{p / p_i},
$$
the $A_{\vec{P}}$ constant is defined by
$$
[\vec{w}]_{A_{\vec{P}}}:=\sup_Q \ave{v_{\vec{w}}}_Q \prod_{i=1}^m \ave{\sigma_i}_Q^{p / p_i^{\prime}},
$$
where $\sigma_i = w_i^{1-p_i^{\prime}}$. We say that $\vec{w}$ satisfies the multilinear $A_{\vec{P}}$ condition if $[\vec{w}]_{A_{\vec{P}}} < \infty$. Particularly, in Theorem 3.6 of the aforementioned paper, they proved that 
\begin{equation} 
[v_{\vec{w}}]_{A_{mp}} \leqslant [\vec{w}]_{A_{\vec{P}}}, [\sigma_i]_{A_{mp_i^{\prime}}} \leqslant [\vec{w}]_{A_{\vec{P}}}^{p_i^{\prime}/p}, \quad \forall \vec{w} \in A_{\vec{P}}\label{important formula}.
\end{equation}
\subsection{Dyadic cubes system, sparse operators and stopping time argument}\label{sparse operators}

The \emph{dyadic cubes system} $\mathscr{D}$ is a family of cubes with the following properties:

(1) for any $Q \in \mathscr{D}$, its sides are parallel to the coordinate axes and its sidelength is of the form of $2^k$.

(2) $Q\cap R \in \{Q,R,\emptyset\}$, for any $Q,R \in \mathscr{D}$.

(3) the cubes of fixed sidelength $2^k$ form a partition of $\mathbb{R}^n$

A collection $\mathcal{S} \subset \mathscr{D}$ is called \emph{sparse} if for each $Q \in \mathcal{S}$, there exists a subset $E_Q \subset Q$ such that $|E_Q| \geqslant \frac{1}{2}|Q|$ and the sets $\{E_Q\}_{Q \in \mathcal{S}}$ are pairwise disjoint. For a sparse family $\mathcal{S}$, we can define the \emph{sparse operator} $A_{\mathscr{D}, \mathcal{S}}$ as follows.
$$A_{\mathscr{D}, \mathcal{S}}(\vec{f})=\sum\limits_{Q \in \mathcal{S}}\ave{f_1}_Q\ave{f_2}_Q\chi_Q,$$
where $\vec{f}=(f_1,f_2)$. 

Below, we will introduce the main technique of this paper, stopping time argument, which was introduced by Li and Sun in \cite{LS}, and further improved by Dimi\'an, Hormozi, and Li in \cite{DHL}.

Let $w$ be a weight and $f \in L^p(w)$ for some $0 < p < \infty$, suppose that the sparse family $\mathcal{S}$ has a collection of maximal cubes, in other words, there exists a collecion of disjoint cubes $\{Q_{i}\}_{i \in \Lambda} \subset \mathcal{S}$, such that for any cube $Q \in \mathcal{S}$, there exists $i \in \Lambda$, $Q \subset Q_{i}$. Now we construct the \emph{stopping time family} $\mathscr{F}$ from pair $(f,w)$. Let $\mathscr{F}_0 := \{Q_{i}\}_{i \in \Lambda}$ and
$$\mathscr{F}_{k} := \bigcup_{F \in \mathscr{F}_{k-1}}\{F^{\prime} \subset F : F^{\prime} \text{\, is the maximal cube in\,} \mathcal{S} \text{\,that satisfies\,} \left\langle f \right\rangle_{F^{\prime}}^{w} > 2 \left\langle f\right\rangle_{F}^{w}\},$$ 
where $\left\langle f \right\rangle_{Q}^{w} := \int_{Q}fw \,dx/ w(Q)$, then stopping time family $\mathscr{F}:=\bigcup_{k=0}^ {\infty}\mathscr{F}_k$. It is easy to deduce from the above construction that  
\begin{equation}
\sum\limits_{F \in \mathscr{F}}\left(\left\langle f \right\rangle_{F}^{w}\right)^{p}w(F) \lesssim \|\operatorname{M}_{\mathscr{D}}^{w}(f)\|_{L^{p}\left(w\right)}^p \lesssim \left\|f\right\|_{L^{p}\left(w\right)}^p, \label{basic formula}
\end{equation}
where $\operatorname{M}_{\mathscr{D}}^{w}(f)(x):=\sup_{x \in Q, Q \in \mathscr{D}} \left\langle f \right\rangle_{Q}^{w}$. We use $\pi_{\mathscr{F}}(Q)$ to represent the \emph{stopping parents} of $Q$, that is, the minimal cube containing $Q$ in $\mathscr{F}$. According to the definition, we have $\left\langle f \right\rangle_{Q}^{w} \leqslant 2\left\langle f \right\rangle_{\pi_{\mathscr{F}}(Q)}^{w}$.

\section{Proof of the main results}
In order to prove the main theorem, we need the following lemmas.
\begin{lem}\emph{(}\cite[lemma 3.1]{LMS}\emph{)}\label{reference lemma1}
Let $\vec{P}=(p_1, p_2)$ with $1/p_1 + 1/p_2 = 1/p$ and $1 < p, p_1, p_2 < \infty$, $\vec{w}=\left(w_1, w_2\right) \in A_{\vec{P}}$. Then $\vec{w}^{1}=(v_{\vec{w}}^{1-p^{\prime}}, w_2) \in A_{\vec{P}^1}$, with $\vec{P}^{1}=\left(p^{\prime},p_2\right)$ and
$$[\vec{w}^{1}]_{A_{\vec{P}^{1}}}=[\vec{w}]_{A_{\vec{P}}}^{p_1^{\prime}/p}.$$
\end{lem}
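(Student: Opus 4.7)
The plan is a direct verification by unpacking definitions, relying on standard conjugate-exponent identities. First I would introduce $q$ as the H\"older exponent associated with $\vec{P}^1 = (p', p_2)$, i.e.\ $1/q = 1/p' + 1/p_2$; using $1/p' = 1 - 1/p_1 - 1/p_2$ this gives $q = p_1'$, which already foreshadows the exponent $p_1'/p$ appearing in the conclusion.

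Next I would compute piece by piece the three quantities that make up $[\vec{w}^1]_{A_{\vec{P}^1}}$. For the dual weights, $\sigma^1_2 = w_2^{1-p_2'} = \sigma_2$ is immediate, while
\[
\sigma^1_1 = \left(v_{\vec{w}}^{\,1-p'}\right)^{1-p} = v_{\vec{w}}^{(1-p)(1-p')} = v_{\vec{w}},
\]
using $(1-p)(1-p') = 1 - p - p' + pp' = 1$ (equivalent to $1/p + 1/p' = 1$). For the product weight, expand
\[
v_{\vec{w}^1} = \left(v_{\vec{w}}^{\,1-p'}\right)^{q/p'} w_2^{q/p_2} = w_1^{(1-p') p q / (p_1 p')} \, w_2^{(1-p') p q / (p_2 p') + q/p_2},
\]
and apply the identity $(1-p')/p' = -1/p$: the exponent of $w_2$ telescopes to $0$, and that of $w_1$ becomes $-q/p_1 = -p_1'/p_1 = 1 - p_1'$. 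Hence $v_{\vec{w}^1} = w_1^{1-p_1'} = \sigma_1$.

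Plugging these three identities back into the definition of the new $A_{\vec{P}^1}$ constant and using $q = p_1'$ yields
\[
[\vec{w}^1]_{A_{\vec{P}^1}} = \sup_Q \langle \sigma_1 \rangle_Q \, \langle v_{\vec{w}} \rangle_Q^{p_1'/p} \, \langle \sigma_2 \rangle_Q^{p_1'/p_2'},
\]
which is exactly the $(p_1'/p)$-th power of the supremand $\langle v_{\vec{w}} \rangle_Q \langle \sigma_1 \rangle_Q^{p/p_1'} \langle \sigma_2 \rangle_Q^{p/p_2'}$ defining $[\vec{w}]_{A_{\vec{P}}}$. Since $x \mapsto x^{p_1'/p}$ is monotone increasing on $(0,\infty)$, it commutes with the supremum, giving $[\vec{w}^1]_{A_{\vec{P}^1}} = [\vec{w}]_{A_{\vec{P}}}^{p_1'/p}$ as claimed. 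There is no real analytic obstacle: the whole argument is exponent algebra, and the only subtlety is keeping track of two layers of dualization via the identities $(1-p)(1-p') = 1$ and $(1-p')/p' = -1/p$.
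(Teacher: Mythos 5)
Your proof is correct: all the exponent identities check out ($q=p_1'$, $\sigma_1^1=v_{\vec{w}}$, $v_{\vec{w}^1}=\sigma_1$, $\sigma_2^1=\sigma_2$), and the paper itself gives no proof but simply cites \cite[Lemma 3.1]{LMS}, where the argument is exactly this direct unpacking of the definitions. Nothing to add.
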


\begin{lem}\emph{(}\cite[lemma 4.5]{LS}\emph{)}\label{reference lemma2}
Let $\vec{P}=(p_1, p_2)$ with $1/p_1 + 1/p_2 = 1/p$ and $1 < p, p_1, p_2 < \infty$, $\vec{w}=\left(w_1,w_2\right) \in A_{\vec{P}}$. Suppose that $\mathscr{D}$ is a dyadic cubes system and $\mathcal{S}$ is a sparse family in $\mathscr{D}$. Then the following assertions are equivalent. 

(1)$\|A_{\mathscr{D}, \mathcal{S}}(|f_1|\sigma_1,|f_2|\sigma_2)\|_{L^{p,\infty}\left(v_{\vec{w}}\right)} \leqslant C \prod_{i=1}^2\left\|f_i\right\|_{L^{p_i}\left(\sigma_i\right)}.$

(2)$\int_{Q}A_{\mathscr{D}, \mathcal{S}}(|f_1|\sigma_1\chi_{Q},|f_2|\sigma_2\chi_{Q})v_{\vec{w}}\ud x \leqslant C \prod_{i=1}^2\left\|f_i\right\|_{L^{p_i}\left(\sigma_i\right)}v_{\vec{w}}(Q)^{1/p^{\prime}}$ for all dyadic cubes $Q \in \mathcal{S}$ and all functions $f_i \in L^{p_i}\left(\sigma_i\right)$, $i=1,2.$
\end{lem}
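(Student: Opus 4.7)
My plan is to prove the two directions separately. The forward implication $(1) \Rightarrow (2)$ is a short consequence of Kolmogorov's inequality applied to truncations, while the converse $(2) \Rightarrow (1)$ is the substantive direction and will require a principal-cube / stopping-time decomposition adapted to the pair $(|f_1|\sigma_1, |f_2|\sigma_2)$.

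For $(1) \Rightarrow (2)$: fix $Q \in \mathcal{S}$ and apply (1) to the truncations $|f_i|\chi_Q$. Because $\|f_i\chi_Q\|_{L^{p_i}(\sigma_i)} \leq \|f_i\|_{L^{p_i}(\sigma_i)}$, one obtains a weak-type $L^{p,\infty}(v_{\vec{w}})$ estimate for $A_{\mathscr{D},\mathcal{S}}(|f_1|\sigma_1\chi_Q, |f_2|\sigma_2\chi_Q)$ with constant $C\prod_i\|f_i\|_{L^{p_i}(\sigma_i)}$. Kolmogorov's inequality,
\[
\int_E g \, d\mu \leq \tfrac{p}{p-1}\, \mu(E)^{1/p'}\, \|g\|_{L^{p,\infty}(\mu)} \quad (p>1),
\]
applied with $\mu = v_{\vec{w}}$ and $E = Q$, then yields (2).

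For $(2) \Rightarrow (1)$: fix $\lambda > 0$ and set $\Omega_\lambda = \{A_{\mathscr{D},\mathcal{S}}(|f_1|\sigma_1, |f_2|\sigma_2) > \lambda\}$; the goal is $v_{\vec{w}}(\Omega_\lambda) \lesssim \lambda^{-p}\prod_i\|f_i\|_{L^{p_i}(\sigma_i)}^{p}$. I would construct a principal family $\mathcal{P} \subset \mathcal{S}$ by iterated stopping on the averages $\ave{|f_i|\sigma_i}_Q$: start from the maximal cubes of $\mathcal{S}$, and recursively adjoin the maximal $P' \subsetneq P$ in $\mathcal{S}$ at which either $\ave{|f_1|\sigma_1}_{P'} > 2\ave{|f_1|\sigma_1}_P$ or $\ave{|f_2|\sigma_2}_{P'} > 2\ave{|f_2|\sigma_2}_P$. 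Writing $\pi(Q)$ for the smallest principal cube containing $Q$, one splits
\[
A_{\mathscr{D},\mathcal{S}}(|f_1|\sigma_1, |f_2|\sigma_2) = \sum_{P \in \mathcal{P}} A^P, \qquad A^P := \sum_{\substack{Q \in \mathcal{S}\\ \pi(Q) = P}} \ave{|f_1|\sigma_1}_Q \ave{|f_2|\sigma_2}_Q \chi_Q.
\]
Each $A^P$ is supported in $P$, and the stopping condition guarantees $\ave{|f_i|\sigma_i}_Q \leq 2\ave{|f_i|\sigma_i}_P$ whenever $\pi(Q) = P$. Applying the testing hypothesis (2) on $P$ together with Chebyshev's inequality yields
\[
v_{\vec{w}}\bigl(\{x \in P : A^P(x) > c\lambda\}\bigr) \lesssim \lambda^{-1} v_{\vec{w}}(P)^{1/p'} \prod_{i=1}^{2} \|f_i\chi_P\|_{L^{p_i}(\sigma_i)}.
\]
One then groups the principal cubes into dyadic shells according to the size of $\ave{|f_1|\sigma_1}_P \ave{|f_2|\sigma_2}_P$ relative to $\lambda$, sums the per-block estimates using H\"older in the $P$-variable, and invokes the Carleson embedding for sparse families (which $\mathcal{P}$ inherits from $\mathcal{S}$ together with the $A_\infty$ property of $\sigma_i$) to collapse the resulting sums into $\|f_i\|_{L^{p_i}(\sigma_i)}^{p_i}$.

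The main obstacle is this last summation: Chebyshev per block produces only a weak-$L^1$ estimate, and assembling these into a weak-$L^p$ bound at the correct rate $\lambda^{-p}$ requires a careful allocation of the budget $\lambda$ across dyadic shells and a precise balancing of the factors $v_{\vec{w}}(P)^{1/p'}$ against $\|f_i\chi_P\|_{L^{p_i}(\sigma_i)}$. The sparseness of $\mathcal{P}$, descending from that of $\mathcal{S}$, is the crucial geometric input that makes the resulting Carleson-type sums converge with the sharp $\lambda$-dependence, and it is there that one pays for having only a testing condition rather than the full weak-type bound.
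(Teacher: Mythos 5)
The paper itself does not prove this lemma: it is quoted from \cite[Lemma 4.5]{LS}, and the only in-paper content is the remark that the constants in (1) and (2) are mutually comparable. So your proposal has to be judged on its own. Your direction $(1)\Rightarrow(2)$ is correct and complete: truncating $f_i$ to $Q$ and applying Kolmogorov's inequality with $1<p$ gives (2) with constant at most $\frac{p}{p-1}$ times the constant in (1), which in fact substantiates half of the paper's remark on comparability. The converse, which is the entire substance of the lemma, is only a strategy statement, and it has concrete gaps beyond the one you acknowledge.

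First, you stop on the Lebesgue averages $\ave{|f_i|\sigma_i}_{P'}$, but the Carleson embedding you plan to invoke at the end is (\ref{basic formula}), which controls sums of the \emph{weighted} averages $\ave{|f_i|}_{P}^{\sigma_i}$ by $\|f_i\|_{L^{p_i}(\sigma_i)}$; with your stopping data the resulting sum $\sum_P\ave{|f_i|\sigma_i}_P^{p_i}|P|$ is governed by $\|M(f_i\sigma_i)\|_{L^{p_i}(\ud x)}$, which is not controlled by $\|f_i\|_{L^{p_i}(\sigma_i)}$. Second, and decisively, the step you defer (``assembling these into a weak-$L^p$ bound'') is the whole proof. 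A common threshold $c\lambda$ for every block cannot work: the principal cubes containing a point $x$ form an infinite nested chain, so $\{\sum_P A^P>\lambda\}\not\subset\bigcup_P\{A^P>c\lambda\}$, and any allocation $\lambda_P$ with $\sum_{P\ni x}\lambda_P\leqslant\lambda$ leaves you to bound $\sum_P\lambda_P^{-1}v_{\vec{w}}(P)^{1/p'}\prod_i\|f_i\chi_P\|_{L^{p_i}(\sigma_i)}$, which does not close by the tools you list. The standard way out --- and the route taken in \cite{LS}, mirrored by the paper's own proof of Lemma \ref{main lemma} --- is not to sum Chebyshev estimates at all, but to use $\|g\|_{L^{p,\infty}(v_{\vec{w}})}\approx\sup_E v_{\vec{w}}(E)^{-1/p'}\int_E g\ud v_{\vec{w}}$ (duality with $L^{p',1}$, valid since $p>1$), and then run a \emph{simultaneous} corona decomposition in the $f_i$ with respect to $\sigma_i$ and in the dual function with respect to $v_{\vec{w}}$, applying the testing condition (2) on the stopping cubes and closing each factor with (\ref{basic formula}). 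As written, your proposal establishes only the easy implication.
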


\begin{rem}Review the proof of the above lemma in article \cite{LS}, we found that although the constant $C$ in the two equivalent propositions may be different, they can be compared to each other.
\end{rem}

\begin{lem}\label{main lemma}
Let $\vec{P}=(p_1, p_2)$ with $1/p_1 + 1/p_2 = 1/p$ and $1 < p, p_1, p_2 < \infty$, $\vec{w}=\left(w_1,w_2\right) \in A_{\vec{P}}$. Suppose that $\widetilde{Q}$ is a dyadic cube and $\operatorname{supp}{f_2} \subset \widetilde{Q}$, then 

\begin{align}
\|\chi_{\widetilde{Q}}A_{\mathscr{D}, \mathcal{S}}(\sigma_1\chi_{\widetilde{Q}}, |f_2|\sigma_2)\|_{L^{p}\left(v_{\vec{w}}\right)} \lesssim & \max{\big\{\min{\{[\sigma_1]_{A_\infty},[\sigma_2]_{A_\infty}\}}^{1/p},\min{\{[\sigma_1]_{A_\infty},[v_{\vec{w}}]_{A_\infty}\}}^{1/p_2^{\prime}}\big\}} \nonumber\\
& \times [\vec{w}]_{A_{\vec{P}}}^{1/p}\left\|f_2\right\|_{L^{p_2}\left(\sigma_2\right)}\sigma_1(\widetilde{Q})^{1/p_1}. \nonumber
\end{align}
\end{lem}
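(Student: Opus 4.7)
The plan is to dualize the $L^p(v_{\vec{w}})$ norm, peel off the factor $[\vec{w}]_{A_{\vec{P}}}^{1/p}$ via the multilinear $A_{\vec{P}}$ inequality, and bound the remaining sum in two complementary ways whose better bound gives the stated $\max$. First I split the sparse family according as $Q\subset\widetilde{Q}$ or $Q\supsetneq\widetilde{Q}$; the latter contributes a geometric tail (since $\operatorname{supp}f_2\subset\widetilde{Q}$ forces $\langle|f_2|\sigma_2\rangle_Q=\|f_2\sigma_2\|_{L^1}/|Q|$ to decay geometrically along a sparse chain), which is absorbed into the stated bound. Dualizing against $h\ge 0$ in the unit ball of $L^{p'}(v_{\vec{w}})$ supported in $\widetilde{Q}$, the main sum becomes
$$
\sum_{Q\subset\widetilde{Q}}\langle\sigma_1\rangle_Q\langle\sigma_2\rangle_Q\langle v_{\vec{w}}\rangle_Q\langle f_2\rangle_Q^{\sigma_2}\langle h\rangle_Q^{v_{\vec{w}}}|Q|.
$$
Raising the defining inequality of $[\vec{w}]_{A_{\vec{P}}}$ to the power $1/p$ yields
$$
\langle\sigma_1\rangle_Q\langle\sigma_2\rangle_Q\langle v_{\vec{w}}\rangle_Q\le[\vec{w}]_{A_{\vec{P}}}^{1/p}\langle\sigma_1\rangle_Q^{1/p_1}\langle\sigma_2\rangle_Q^{1/p_2}\langle v_{\vec{w}}\rangle_Q^{1/p'},
$$
so that $[\vec{w}]_{A_{\vec{P}}}^{1/p}$ factors out and the task reduces to bounding
$$
S:=\sum_{Q}\sigma_1(Q)^{1/p_1}\sigma_2(Q)^{1/p_2}v_{\vec{w}}(Q)^{1/p'}\langle f_2\rangle_Q^{\sigma_2}\langle h\rangle_Q^{v_{\vec{w}}}.
$$

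The two terms inside the $\max$ arise from two different H\"older/stopping-time strategies for $S$. For $\min\{[\sigma_1]_{A_\infty},[\sigma_2]_{A_\infty}\}^{1/p}$, I would build a stopping-time family $\mathscr{F}$ from $(f_2,\sigma_2)$ as in Section \ref{sparse operators}, giving $\langle f_2\rangle_Q^{\sigma_2}\le 2\langle f_2\rangle_{\pi_{\mathscr{F}}(Q)}^{\sigma_2}$, and apply $(p,p')$-H\"older pairing $\sigma_1(Q)^{1/p_1}\sigma_2(Q)^{1/p_2}\langle f_2\rangle_Q^{\sigma_2}$ against $v_{\vec{w}}(Q)^{1/p'}\langle h\rangle_Q^{v_{\vec{w}}}$. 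Absorbing $\sigma_2(Q)\le\sigma_2(F)$ into the stopping root and invoking the Carleson packing $\sum_{Q\subset R,\,Q\in\mathcal{S}}\sigma(Q)\le C[\sigma]_{A_\infty}\sigma(R)$ produces exactly one $A_\infty$ constant with exponent $1/p$, while formula (\ref{basic formula}) handles the $f_2$- and $h$-weighted sums at no $A_\infty$ cost. The symmetric choice, stopping on $\chi_{\widetilde{Q}}$ with respect to $\sigma_1$, yields instead $[\sigma_1]_{A_\infty}^{1/p}$; taking the smaller of the two constants gives the $\min$.

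For $\min\{[\sigma_1]_{A_\infty},[v_{\vec{w}}]_{A_\infty}\}^{1/p_2'}$ I use a different split exploiting the identity $1/p_1+1/p'=1-1/p_2=1/p_2'$: a $(p_2,p_2')$-H\"older groups $\sigma_2(Q)^{1/p_2}\langle f_2\rangle_Q^{\sigma_2}$ with the rest, and a subsequent $(p_1/p_2',p'/p_2')$-H\"older on the residual (whose reciprocals $p_2'/p_1+p_2'/p'$ also sum to $1$) reduces matters to a single naked weight sum $\sum_Q\sigma_1(Q)$ together with a basic $L^{p'}(v_{\vec{w}})$-Carleson sum for $h$, so the surviving $A_\infty$ constant appears with exponent $1/p_2'$. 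Lemma \ref{reference lemma1}, under which $\sigma_1^1=(v_{\vec{w}}^{1-p'})^{1-p}=v_{\vec{w}}$, allows the same argument to be run with $v_{\vec{w}}$ in place of $\sigma_1$, giving $[v_{\vec{w}}]_{A_\infty}^{1/p_2'}$; again the $\min$ is taken over the two possibilities. In both strategies, formula (\ref{basic formula}) together with the single Carleson packing for $\sigma_1$ collects the factor $\|f_2\|_{L^{p_2}(\sigma_2)}\sigma_1(\widetilde{Q})^{1/p_1}$.

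The main technical obstacle is arranging each H\"older chain so that exactly one $A_\infty$ constant appears, with the precise exponent ($1/p$ or $1/p_2'$) claimed. A naive three-fold $(p_1,p_2,p')$-H\"older would collect both $[\sigma_1]_{A_\infty}^{1/p_1}$ and $[\sigma_2]_{A_\infty}^{1/p_2}$, yielding a bound strictly weaker than either term in the $\max$; avoiding this requires either the pointwise absorption $\sigma_i(Q)\le\sigma_i(F)$ tied to the stopping tree, or the two-stage H\"older combination that exploits the identity $1/p_1+1/p'=1/p_2'$ so that the two residual weight sums are themselves re-grouped into a single naked one.
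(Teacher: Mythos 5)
Your skeleton (split into $Q\subset\widetilde{Q}$ versus $Q\supset\widetilde{Q}$, dualize, extract $[\vec{w}]_{A_{\vec{P}}}^{1/p}$ from the $A_{\vec{P}}$ condition, then run stopping times) matches the paper's, but the two steps that actually produce the $A_\infty$ constants with the stated exponents do not work as described. First, after your $(p,p')$-H\"older the $f_2$-factor contains $\sum_{Q:\pi(Q)=F}\sigma_1(Q)^{p/p_1}\sigma_2(Q)^{p/p_2}$, and absorbing $\sigma_2(Q)\leqslant\sigma_2(F)$ leaves $\sum_{Q}\sigma_1(Q)^{p/p_1}$ with $p/p_1<1$. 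This is not a Carleson sum: for the sparse family consisting of all $2^{nk}$ dyadic subcubes of $F$ at a single deep level $k$ (together with $F$), one has $\sum_Q|Q|^{p/p_1}=2^{nk(1-p/p_1)}|F|^{p/p_1}\to\infty$, so no packing bound $\lesssim[\sigma_1]_{A_\infty}\sigma_1(F)^{p/p_1}$ holds. The coupled sum $\sum_Q\langle\sigma_1\rangle_Q^{p/p_1}\langle\sigma_2\rangle_Q^{p/p_2}|Q|$ is exactly borderline (the exponents sum to $1$), and the paper handles it by creating room with the sharp reverse H\"older inequality ($\varepsilon\sim[\sigma_1]_{A_\infty}^{-1}$), an auxiliary H\"older with exponents $s,s'$ chosen so that $s\gamma<1$, and Kolmogorov's inequality for $M(\sigma_1^{1+\varepsilon}\chi_{\widetilde{Q}})$; the exponent $1/p$ arises as $1/(sp)+1/(s'p)$, not from a packing estimate. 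Your proposal contains no substitute for this mechanism, which is the heart of the lemma.

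Second, you build only one stopping family, so your dual factor $\bigl(\sum_{Q\in\mathcal{S}'}(\langle h\rangle_Q^{v_{\vec{w}}})^{p'}v_{\vec{w}}(Q)\bigr)^{1/p'}$ runs over the \emph{entire} sparse family; formula (\ref{basic formula}) only controls sums over a stopping family, and reducing the full sum to one costs a Carleson embedding, i.e.\ an extra factor $[v_{\vec{w}}]_{A_\infty}^{1/p'}$ that is absent from the claimed bound. The paper avoids this by constructing a second stopping family $\mathscr{H}$ from $(h,v_{\vec{w}})$, splitting according to whether $\pi_{\mathscr{H}}(Q)\subset\pi_{\mathscr{F}_2}(Q)$ or vice versa ($I_1$ and $I_2$), and using a pointwise supremum over the innermost stopping cubes so that H\"older is applied in $L^{p}(v_{\vec{w}})\times L^{p'}(v_{\vec{w}})$ at the level of functions, leaving only a sum over $\mathscr{H}$ to which (\ref{basic formula}) applies for free. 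This two-family splitting is also what generates the two terms in the $\max$: the $1/p$ exponent comes from (\ref{key formula1}) applied in $I_1$, and the $1/p_2'$ exponent from the $L^{p_2'}(\sigma_2)$ estimate (\ref{key formula2}) applied in $I_2$ -- not from the two different H\"older groupings you describe.
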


Suppose Lemma \ref{main lemma} is proven, referring to the method in \cite{LS}, we can directly prove Theorem \ref{main theorem} as follows.
\begin{proof}[Proof of Theorem\ref{main theorem}]
Using Lemma \ref{reference lemma1} and Lemma \ref{main lemma}, for each $Q \in \mathcal{S}$, we have 
\begin{align}
& v_{\vec{w}}(Q)^{-1/p^{\prime}}\int_{Q}A_{\mathscr{D}, \mathcal{S}}(|f_1|\sigma_1\chi_{Q},|f_2|\sigma_2\chi_{Q}) v_{\vec{w}}\ud x \nonumber\\
= & v_{\vec{w}}(Q)^{-1/p^{\prime}}\int_{Q}A_{\mathscr{D}, \mathcal{S}}(v_{\vec{w}}\chi_{Q},|f_2|\sigma_2\chi_{Q})|f_1|\sigma_1\ud x \nonumber\\
\leqslant & v_{\vec{w}}(Q)^{-1/p^{\prime}}\left(\int_{Q}\left(A_{\mathscr{D}, \mathcal{S}}(v_{\vec{w}}\chi_{Q},|f_2|\sigma_2\chi_{Q})\right)^{p_1^{\prime}}\sigma_1\ud x\right)^{1/p_1^{\prime}}\left(\int_{Q}|f_1|^{p_1}\sigma_1\ud x\right)^{1/p_1} \nonumber\\
\lesssim & \max{\big\{\min{\{[v_{\vec{w}}]_{A_\infty},[\sigma_2]_{A_\infty}\}}^{\frac{1}{p_1^{\prime}}},\min{\{[v_{\vec{w}}]_{A_\infty},[\sigma_1]_{A_\infty}\}}^{\frac{1}{p_2^{\prime}}}\big\}}[\vec{w}^{1}]_{A_{\vec{P}^1}}^{\frac{1}{p_1^{\prime}}}\left\|f_1\right\|_{L^{p_1}\left(\sigma_1\right)}\left\|f_2\right\|_{L^{p_2}\left(\sigma_2\right)} \nonumber\\
\overset{(\ref{important formula})}{\leqslant} & [\vec{w}]_{A_{\vec{P}}}^{\frac{1}{p} + \max{\big\{\min\big\{\frac{1}{p_1^{\prime}},\frac{1}{p_1^{\prime}}\frac{p_2^{\prime}}{p}\big\}, \min\big\{\frac{1}{p_2^{\prime}}, \frac{1}{p_2^{\prime}}\frac{p_1^{\prime}}{p}\big\}\big\}}}\left\|f_1\right\|_{L^{p_1}\left(\sigma_1\right)}\left\|f_2\right\|_{L^{p_2}\left(\sigma_2\right)}. \nonumber
\end{align}
Finally, according to Lemma \ref{reference lemma2}, we get the desired result. This finishes the proof.
\end{proof}

To prove Lemma \ref{main lemma}, we need the following lemma.
\begin{lem}\emph{(}\cite[lemma 4.15]{DHL}\emph{)}\label{reference lemma3}
Let $\vec{P}=(p_1, p_2)$ with $1/p_1 + 1/p_2 = 1/p$ and $1 < p, p_1, p_2 < \infty$, $\vec{w}=\left(w_1,w_2\right) \in A_{\vec{P}}$. Then for any sparse family  $\mathcal{S}$, we have 
\begin{equation}
\bigg\|\sum\limits_{Q \in \mathcal{S}}\ave{\sigma_1}_Q\ave{\sigma_2}_Q \chi_Q\bigg\|_{L^{p}\left(v_{\vec{w}}\right)} \lesssim [\vec{w}]_{A_{\vec{P}}}^{1/p}\bigg(\sum\limits_{Q \in \mathcal{S}}\ave{\sigma_1}_Q^{p/p_1}\ave{\sigma_2}_Q^{p/p_2}|Q|\bigg)^{1/p} \label{key formula1}
\end{equation}
\begin{equation}
\bigg\|\sum\limits_{Q \in \mathcal{S}}\ave{\sigma_1}_Q\ave{v_{\vec{w}}}_Q \chi_Q\bigg\|_{L^{p_2^{\prime}}\left(\sigma_2\right)} \lesssim
 [\vec{w}]_{A_{\vec{P}}}^{1/p}\bigg(\sum\limits_{Q \in \mathcal{S}}\ave{\sigma_1}_Q^{p_2^{\prime}/p_1}\ave{v_{\vec{w}}}_Q^{p_2^{\prime}/p^{\prime}}|Q|\bigg)^{1/p_2^{\prime}} \label{key formula2}
\end{equation}
\end{lem}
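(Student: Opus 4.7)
The plan is to establish both inequalities \eqref{key formula1} and \eqref{key formula2} in parallel via the same four-step recipe: duality, a fractional use of the $A_{\vec{P}}$ condition, H\"older's inequality with carefully chosen exponents, and a sparse Carleson embedding. First I would dualize: for \eqref{key formula1} this reduces to bounding
\[
\sum_{Q\in\mathcal{S}} \ave{\sigma_1}_Q \ave{\sigma_2}_Q \int_Q g\, v_{\vec{w}}\, dx
\]
for $g\geq 0$ with $\|g\|_{L^{p'}(v_{\vec{w}})}\leq 1$, and for \eqref{key formula2} analogously to bounding $\sum_Q \ave{\sigma_1}_Q \ave{v_{\vec{w}}}_Q \int_Q g\,\sigma_2\,dx$ for $\|g\|_{L^{p_2}(\sigma_2)}\leq 1$.

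The key step is a fractional version of the $A_{\vec{P}}$ condition: taking the $1/p$-th root gives
\[
\ave{v_{\vec{w}}}_Q^{1/p}\ave{\sigma_1}_Q^{1/p_1'}\ave{\sigma_2}_Q^{1/p_2'}\leq [\vec{w}]_{A_{\vec{P}}}^{1/p}.
\]
For \eqref{key formula1}, I would split $\ave{\sigma_1}_Q \ave{\sigma_2}_Q = \bigl[\ave{\sigma_1}_Q^{1/p_1'}\ave{\sigma_2}_Q^{1/p_2'}\bigr]\cdot\bigl[\ave{\sigma_1}_Q^{1/p_1}\ave{\sigma_2}_Q^{1/p_2}\bigr]$, bound the first bracket by $[\vec{w}]_{A_{\vec{P}}}^{1/p}\ave{v_{\vec{w}}}_Q^{-1/p}$, and rewrite $\int_Q g\,v_{\vec{w}} = \ave{g}_Q^{v_{\vec{w}}}\ave{v_{\vec{w}}}_Q|Q|$ so that one power of $\ave{v_{\vec{w}}}_Q$ is absorbed. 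Applying H\"older with exponents $(p,p')$ to the resulting sum $[\vec{w}]_{A_{\vec{P}}}^{1/p}\sum_Q \ave{\sigma_1}_Q^{1/p_1}\ave{\sigma_2}_Q^{1/p_2}\ave{v_{\vec{w}}}_Q^{1/p'}\ave{g}_Q^{v_{\vec{w}}}|Q|$ then produces exactly the discrete RHS of \eqref{key formula1} times $\bigl(\sum_Q (\ave{g}_Q^{v_{\vec{w}}})^{p'}v_{\vec{w}}(Q)\bigr)^{1/p'}$. For \eqref{key formula2}, the analogous split is $\ave{\sigma_1}_Q\ave{v_{\vec{w}}}_Q = \bigl[\ave{\sigma_1}_Q^{1/p_1'}\ave{v_{\vec{w}}}_Q^{1/p}\bigr]\cdot\bigl[\ave{\sigma_1}_Q^{1/p_1}\ave{v_{\vec{w}}}_Q^{1/p'}\bigr]$; the fractional $A_{\vec{P}}$ identity bounds the first bracket by $[\vec{w}]_{A_{\vec{P}}}^{1/p}\ave{\sigma_2}_Q^{-1/p_2'}$, and the negative power $-1/p_2'$ cancels against $\sigma_2(Q) = \ave{\sigma_2}_Q|Q|$ coming from $\int_Q g\,\sigma_2$. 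H\"older with exponents $(p_2',p_2)$ then produces the discrete RHS of \eqref{key formula2} times $\bigl(\sum_Q (\ave{g}_Q^{\sigma_2})^{p_2}\sigma_2(Q)\bigr)^{1/p_2}$.

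Both computations reduce to the same endgame: for $\mu\in\{v_{\vec{w}},\sigma_2\}$ and $r\in\{p',p_2\}$, one needs the sparse Carleson embedding
\[
\sum_{Q\in\mathcal{S}} (\ave{g}_Q^\mu)^r \mu(Q) \lesssim \|g\|_{L^r(\mu)}^r.
\]
This is where \eqref{important formula} enters: both $v_{\vec{w}}$ and $\sigma_2$ lie in $A_\infty$, so sparseness $|E_Q|\geq|Q|/2$ upgrades to $\mu(Q)\lesssim \mu(E_Q)$, and using $\ave{g}_Q^\mu\leq M_\mathscr{D}^\mu g$ on $E_Q$ together with the disjointness of the $E_Q$ finishes the bound via the weighted dyadic maximal theorem. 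The main obstacle is the bookkeeping in the factorization: the product must be split in exactly the right fractional way so that after H\"older the extracted $A_{\vec{P}}$ power remains at precisely $1/p$ (any less balanced choice inflates it by the H\"older exponents) and the other H\"older factor matches, on one side, the exact discrete sum written in the statement and, on the other, a quantity to which the Carleson embedding applies; getting the two bracketings in the factorization of $\ave{\sigma_1}_Q\ave{\sigma_2}_Q$ and $\ave{\sigma_1}_Q\ave{v_{\vec{w}}}_Q$ right is the only delicate point.
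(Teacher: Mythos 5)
Your reduction is set up correctly through Hölder: after dualizing, applying the fractional $A_{\vec{P}}$ bound $\ave{v_{\vec{w}}}_Q^{1/p}\ave{\sigma_1}_Q^{1/p_1'}\ave{\sigma_2}_Q^{1/p_2'}\leq[\vec{w}]_{A_{\vec{P}}}^{1/p}$, and splitting $|Q|=|Q|^{1/p}|Q|^{1/p'}$ (resp.\ $|Q|^{1/p_2'}|Q|^{1/p_2}$), one is indeed left with $[\vec{w}]_{A_{\vec{P}}}^{1/p}$ times the stated discrete sum to the power $1/p$ (resp.\ $1/p_2'$) times $\bigl(\sum_{Q\in\mathcal{S}}(\ave{g}_Q^\mu)^r\mu(Q)\bigr)^{1/r}$ with $\mu\in\{v_{\vec{w}},\sigma_2\}$, $r\in\{p',p_2\}$. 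The problem is the claimed ``sparse Carleson embedding'' $\sum_{Q\in\mathcal{S}}(\ave{g}_Q^\mu)^r\mu(Q)\lesssim\|g\|_{L^r(\mu)}^r$. This estimate requires $\mathcal{S}$ to be \emph{$\mu$-sparse}, not merely Lebesgue-sparse; the disjointness of the sets $E_Q$ is only useful together with $\mu(Q)\lesssim\mu(E_Q)$. You propose to get $\mu(Q)\lesssim\mu(E_Q)$ from $\mu\in A_\infty$ via (\ref{important formula}), and that is possible, but the implied constant then depends on $[\mu]_{A_\infty}$ (and in general the dependence is exponential). Whereas in the statement of Lemma~\ref{reference lemma3} the implicit constant is weight-independent and the whole weight factor is exactly $[\vec{w}]_{A_{\vec{P}}}^{1/p}$. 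Any extra weight-dependent factor at this point is fatal for the paper: it would immediately inflate the exponent $\alpha$ in Theorem~\ref{main theorem} past $\beta$ once $[v_{\vec{w}}]_{A_\infty}$ or $[\sigma_2]_{A_\infty}$ is replaced by $[\vec{w}]_{A_{\vec{P}}}$ via (\ref{important formula}).

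That this is a genuine obstruction and not a bookkeeping issue can be seen concretely: with $n=1$, $\mathcal{S}=\{[0,2^{-k})\}_{k=0}^K$ is $\tfrac12$-sparse in Lebesgue measure, yet for $\mu$ essentially supported on $[0,2^{-K})$ and $g=\chi_{[0,2^{-K})}$ one has $\ave{g}_{[0,2^{-k})}^\mu=1$ and $\mu([0,2^{-k}))=\mu(\mathbb{R})$ for all $k\leq K$, so $\sum_Q(\ave{g}_Q^\mu)^r\mu(Q)\approx K\,\|g\|_{L^r(\mu)}^r$. Thus the embedding with Lebesgue-sparse $\mathcal{S}$ must pay for whatever $A_\infty$-type control one imposes on $\mu$, and cannot give a weight-free constant. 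So the weighted Carleson form must be avoided altogether; one needs an argument that either lands on a \emph{Lebesgue}-measure Carleson sum (which really is controlled by $|E_Q|\geq|Q|/2$ and the unweighted maximal function, at no weight cost), or computes $\int\bigl(\sum_Q\ave{\sigma_1}_Q\ave{\sigma_2}_Q\chi_Q\bigr)^p v_{\vec{w}}$ directly by a corona or pigeonholing scheme that exploits the nested structure of the cubes through each point. As currently written, the final step does not deliver the weight-independent constant claimed in (\ref{key formula1})--(\ref{key formula2}).
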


\begin{proof}[Proof of Lemma \ref{main lemma}]
In the first half of the proof, we will use a method similar to that in \cite{DHL} and \cite{LS}. Since $\operatorname{supp}{f_2} \subset \widetilde{Q}$, we have 
\begin{align}
A_{\mathscr{D}, \mathcal{S}}(\sigma_1\chi_{\widetilde{Q}}, |f_2|\sigma_2) & = \sum_{\substack{Q \in \mathcal{S}\\ Q \cap {\widetilde{Q}} \neq \emptyset}}\ave{\sigma_1\chi_{\widetilde{Q}}}_Q \ave{|f_2|\sigma_2}_Q \chi_Q \nonumber\\
& = \sum_{\substack{Q \in \mathcal{S}\\ {\widetilde{Q}} \subset Q}}\ave{\sigma_1\chi_{\widetilde{Q}}}_Q \ave{|f_2|\sigma_2}_Q\chi_Q + \sum_{\substack{Q \in \mathcal{S} \\ Q \subset {\widetilde{Q}}}}\ave{\sigma_1}_Q \ave{|f_2|\sigma_2}_Q \chi_Q \nonumber\\
& := A_{\mathscr{D}, \mathcal{S}}^{1}(\sigma_1\chi_{\widetilde{Q}}, |f_2|\sigma_2)+A_{\mathscr{D}, \mathcal{S}}^{2}(\sigma_1\chi_{\widetilde{Q}}, |f_2|\sigma_2).\nonumber
\end{align}
For $A_{\mathscr{D}, \mathcal{S}}^{1}(\sigma_1\chi_{\widetilde{Q}}, |f_2|\sigma_2)$, the calculation is not difficult.
\begin{align}
\bigg\|\chi_{\widetilde{Q}} A_{\mathscr{D}, \mathcal{S}}^{1}(\sigma_1\chi_{\widetilde{Q}}, |f_2|\sigma_2) \bigg\|_{L^{p}\left(v_{\vec{w}}\right)} &= \bigg\| \sum_{\widetilde{Q} \subset Q}\frac{\sigma_1(Q \cap \widetilde{Q})\int_{\widetilde{Q}}f_2(y_2)\sigma_2\ud y_2}{|Q|^2} \chi_{\widetilde{Q}}\bigg\|_{L^{p}\left(v_{\vec{w}}\right)}  \nonumber\\
& \lesssim \left\| \frac{\sigma_1(\widetilde{Q})\int_{\widetilde{Q}}f_2(y_2)\sigma_2\ud y_2}{|\widetilde{Q}|^2} \chi_{\widetilde{Q}}\right\|_{L^{p}\left(v_{\vec{w}}\right)} \nonumber\\
& \leqslant \frac{\sigma_1(\widetilde{Q})\left\|f_2\right\|_{L^{p_2}\left(\sigma_2\right)}\sigma_2(\widetilde{Q})^{1/p_2^{\prime}}}{|\widetilde{Q}|^2}v_{\vec{w}}(\widetilde{Q})^{1/p}\nonumber\\
& \leqslant [\vec{w}]_{A_{\vec{P}}}^{1/p}\left\|f_2\right\|_{L^{p_2}\left(\sigma_2\right)}\sigma_1(\widetilde{Q})^{1/p_1}. \nonumber
\end{align}
It remains to estimate $A_{\mathscr{D}, \mathcal{S}}^{2}(\sigma_1\chi_{\widetilde{Q}}, |f_2|\sigma_2)$. By duality, we have
\begin{align}
\left\| A_{\mathscr{D}, \mathcal{S}}^{2}(\sigma_1\chi_{\widetilde{Q}}, |f_2|\sigma_2) \right\|_{L^{p}\left(v_{\vec{w}}\right)} &= \bigg\| \sum_{Q \subset \widetilde{Q}}\ave{f_2}_{Q}^{\sigma_2}\ave{\sigma_1}_{Q}\ave{\sigma_2}_{Q} \chi_{Q} \bigg\|_{L^{p}\left(v_{\vec{w}}\right)} \nonumber\\
& = \sup_{\|h\|_{L^{p'}\left(v_{\vec{w}}\right)}=1}\sum_{Q \subset \widetilde{Q}}\ave{f_2}_{Q}^{\sigma_2}\ave{\sigma_1}_{Q}\ave{\sigma_2}_{Q}\int_{Q} h\ud v_{\vec{w}}   \nonumber\\
& = \sup_{\|h\|_{L^{p'}\left(v_{\vec{w}}\right)}=1}\sum_{Q \subset \widetilde{Q}}\ave{f_2}_{Q}^{\sigma_2}\ave{h}_{Q}^{v_{\vec{w}}}\ave{\sigma_1}_{Q}\ave{\sigma_2}_{Q}v_{\vec{w}}(Q). \nonumber
\end{align}

Let $\mathcal{S}^{'} = \mathcal{S} \cap \widetilde{Q}$, then $\widetilde{Q}$ is the maximal cube in the sparse family $\mathcal{S}^{'}$, we can use the stopping time argument mentioned above. Let $\mathscr{F}_2$ and $\mathscr{H}$ represent the stopping time family constructed by $(f_2,\sigma_2)$ and $(h,v_{\vec{w}})$ respectively, and write $\pi_{\mathscr{F}_2}(Q)= F_2 $, and $\pi_{\mathscr{H}}(Q)= H$ together as $\pi(Q)= (F_2, H)$. Then,
\begin{align}
\sum_{Q \in \mathcal{S}^{'}}\ave{f_2}_{Q}^{\sigma_2}\ave{h}_{Q}^{v_{\vec{w}}}\ave{\sigma_1}_{Q}\ave{\sigma_2}_{Q}v_{\vec{w}}(Q)=&\sum_{F_2 \in \mathscr{F}_2 }\sum_{\substack{H \in \mathscr{H} \\ H \subset F_2 }}\sum_{\substack{Q \in \mathcal{S}^{\prime} \\ \pi(Q)= (F_2, H)}} \ave{f_2}_{Q}^{\sigma_2}\ave{h}_{Q}^{v_{\vec{w}}}\lambda_{Q} \nonumber\\
& + \sum_{H \in \mathscr{H}}\sum_{\substack{F_2 \in \mathscr{F}_2 \\ F_2 \subset H }}\sum_{\substack{Q \in \mathcal{S}^{\prime} \\ \pi(Q)= (F_2, H)}}  \ave{f_2}_{Q}^{\sigma_2}\ave{h}_{Q}^{v_{\vec{w}}}\lambda_{Q} \nonumber\\
 := & I_1 + I_2, \nonumber
\end{align}

where $\lambda_Q  = \ave{\sigma_1}_{Q}\ave{\sigma_2}_{Q}v_{\vec{w}}(Q)$. For $I_1$, we have 
\begin{align}
I_1 \leqslant& \, 4 \sum_{F_2 \in \mathscr{F}_2 }  \ave{f_2}_{F_2}^{\sigma_2} \sum_{\substack{H \in \mathscr{H} \\ H \subset F_2 }} \ave{h}_{H}^{v_{\vec{w}}} \sum_{\substack{Q \in \mathcal{S}^{\prime} \\ \pi(Q)= (F_2, H)}}\lambda_{Q} \nonumber\\
\lesssim & \sum_{F_2 \in \mathscr{F}_2 }  \ave{f_2}_{F_2}^{\sigma_2} \int_{F_2} \sum_{\substack{H \in \mathscr{H} \\ H \subset F_2 }} \ave{h}_{H}^{v_{\vec{w}}} \sum_{\substack{Q \in \mathcal{S}^{\prime} \\ \pi(Q)= (F_2, H)}}\frac{\lambda_{Q}\chi_{Q}}{v_{\vec{w}}(Q)}\ud v_{\vec{w}} \nonumber\\
\lesssim & \sum_{F_2 \in \mathscr{F}_2 }  \ave{f_2}_{F_2}^{\sigma_2} \int_{F_2}\Big(\sup _{\substack{H^{\prime} \in \mathscr{H} \\
\pi_{\mathscr{F}_2}(H^{\prime})=F_2}} \ave{h}_{H^{\prime}}^{v_{\vec{w}}}\chi_{H^{\prime}}\Big)\sum_{\substack{H \in \mathscr{H} \\ H \subset F_2 }}  \sum_{\substack{Q \in \mathcal{S}^{\prime} \\ \pi(Q)= (F_2, H)}}\frac{\lambda_{Q}\chi_{Q}}{v_{\vec{w}}(Q)}\ud v_{\vec{w}}\nonumber\\
\lesssim & \sum_{F_2 \in \mathscr{F}_2 }  \ave{f_2}_{F_2}^{\sigma_2} \bigg\|\sum_{\substack{H \in \mathscr{H} \\ H \subset F_2 }}  \sum_{\substack{Q \in \mathcal{S}^{\prime} \\ \pi(Q)= (F_2, H)}}\frac{\lambda_{Q}\chi_{Q}}{v_{\vec{w}}(Q)} \bigg\|_{L^{p}\left(v_{\vec{w}}\right)}\bigg\|\sup _{\substack{H^{\prime} \in \mathscr{H} \\
\pi_{\mathscr{F}_2}(H^{\prime})=F_2}} \ave{h}_{H^{\prime}}^{v_{\vec{w}}}\chi_{H^{\prime}}\bigg\|_{L^{p^{\prime}}\left(v_{\vec{w}}\right)}\nonumber\\
\leqslant & \bigg(\sum_{F_2 \in \mathscr{F}_2 }  \left(\ave{f_2}_{F_2}^{\sigma_2}\right)^{p} \bigg\|\sum_{\substack{H \in \mathscr{H} \\ H \subset F_2 }}  \sum_{\substack{Q \in \mathcal{S}^{\prime} \\ \pi(Q)= (F_2, H)}}\frac{\lambda_{Q}\chi_{Q}}{v_{\vec{w}}(Q)} \bigg\|_{L^{p}\left(v_{\vec{w}}\right)}^{p}\bigg)^{\frac{1}{p}} \nonumber\\
& \times \bigg(\sum_{F_2 \in \mathscr{F}_2 }\sum _{\substack{H^{\prime} \in \mathscr{H} \\
\pi_{\mathscr{F}_2}(H^{\prime})=F_2}}\left(\ave{h}_{H^{\prime}}^{v_{\vec{w}}}\right)^{p^{\prime}}v_{\vec{w}}\left(H^{\prime}\right)\bigg)^{\frac{1}{p^{\prime}}}\nonumber\\
\lesssim & \bigg(\sum_{F_2 \in \mathscr{F}_2 }  \left(\ave{f_2}_{F_2}^{\sigma_2}\right)^{p} \bigg\|\sum_{\substack{H \in \mathscr{H} \\ H \subset F_2 }}  \sum_{\substack{Q \in \mathcal{S}^{\prime} \\ \pi(Q)= (F_2, H)}}\frac{\lambda_{Q}\chi_{Q}}{v_{\vec{w}}(Q)} \bigg\|_{L^{p}\left(v_{\vec{w}}\right)}^{p}\bigg)^{\frac{1}{p}}.\nonumber
\end{align}

The last inequality is due to (\ref{basic formula}). By (\ref{key formula1}), we have
\begin{align}
\bigg\|\sum_{\substack{H \in \mathscr{H} \\ H \subset F_2 }}  \sum_{\substack{Q \in \mathcal{S}^{\prime} \\ \pi(Q)= (F_2, H)}}\frac{\lambda_{Q}\chi_{Q}}{v_{\vec{w}}(Q)} \bigg\|_{L^{p}\left(v_{\vec{w}}\right)} & = \bigg\|\sum_{\substack{Q \in \mathcal{S}^{\prime} \\ \pi_{\mathscr{F}_2}(Q)= F_2 }}\frac{\lambda_{Q}\chi_{Q}}{v_{\vec{w}}(Q)} \bigg\|_{L^{p}\left(v_{\vec{w}}\right)} \nonumber\\ 
& \lesssim [\vec{w}]_{A_{\vec{P}}}^{\frac{1}{p}}\bigg(\sum_{\substack{Q \in \mathcal{S}^{\prime} \\ \pi_{\mathscr{F}_2}(Q)= F_2 }}\ave{\sigma_1}_Q^{\frac{p}{p_1}}\ave{\sigma_2}_Q^{\frac{p}{p_2}}|Q|\bigg)^{\frac{1}{p}}. \nonumber
\end{align}

Let $\varepsilon = \frac{1}{2^{11+d}[\sigma_1]_{A_{\infty}}}$, Hyt\"{o}nen and P\'{e}rez proved the reverse h\"{o}lder inequality $$\ave{\sigma_1^{1 + \varepsilon}}_{Q} \lesssim \ave{\sigma_1}^{1 + \varepsilon}_{Q},\quad \forall Q \subset \mathbb{R}^n$$ in \cite{HP}. Let $\gamma:=\frac{p}{p_1}\frac{1}{1+\varepsilon}$, $\eta:=\frac{p}{p_2}$, $\frac{1}{r}:=\gamma+\eta$, $\frac{1}{s}:=\gamma+\frac{1}{2}(1-\frac{1}{r})$, $\frac{1}{s^{\prime}}:=1-\frac{1}{s}$, we have
\begin{align}
I_1 & \lesssim [\vec{w}]_{A_{\vec{P}}}^{\frac{1}{p}} \bigg(\sum_{F_2 \in \mathscr{F}_2 }  \left(\ave{f_2}_{F_2}^{\sigma_2}\right)^{p}\sum_{\substack{Q \in \mathcal{S}^{\prime} \\ \pi_{\mathscr{F}_2}(Q)= F_2 }}\ave{\sigma_1}_Q^{\frac{p}{p_1}}\ave{\sigma_2}_Q^{\frac{p}{p_2}}|Q|\bigg)^{\frac{1}{p}} \nonumber \\
& \leqslant [\vec{w}]_{A_{\vec{P}}}^{\frac{1}{p}} \bigg(\sum_{F_2 \in \mathscr{F}_2 }  \left(\ave{f_2}_{F_2}^{\sigma_2}\right)^{p}\sum_{\substack{Q \in \mathcal{S}^{\prime} \\ \pi_{\mathscr{F}_2}(Q)= F_2 }}\ave{{\sigma_1}^{1+\varepsilon}}_Q^{\gamma}\ave{\sigma_2}_Q^{\eta}|Q|\bigg)^{\frac{1}{p}} \nonumber \\
& \leqslant [\vec{w}]_{A_{\vec{P}}}^{\frac{1}{p}} \bigg(\sum_{F_2 \in \mathscr{F}_2 }  \left(\ave{f_2}_{F_2}^{\sigma_2}\right)^{p}\bigg(\sum_{\substack{Q \in \mathcal{S}^{\prime} \\ \pi_{\mathscr{F}_2}(Q)= F_2 }}\ave{{\sigma_1}^{1+\varepsilon}}_Q^{s\gamma}|Q|\bigg)^{\frac{1}{s}}\bigg(\sum_{\substack{Q \in \mathcal{S}^{\prime} \\ \pi_{\mathscr{F}_2}(Q)= F_2 }}\ave{\sigma_2}_Q^{s^{\prime}\eta}|Q|\bigg)^{\frac{1}{s^{\prime}}}\bigg)^{\frac{1}{p}} \nonumber \\
& \leqslant [\vec{w}]_{A_{\vec{P}}}^{\frac{1}{p}} \bigg(\sum_{F_2 \in \mathscr{F}_2 }\sum_{\substack{Q \in \mathcal{S}^{\prime} \\ \pi_{\mathscr{F}_2}(Q)= F_2 }}\ave{{\sigma_1}^{1+\varepsilon}}_Q^{s\gamma}|Q|\bigg)^{\frac{1}{sp}} \times \bigg(\sum_{F_2 \in \mathscr{F}_2 }\left(\ave{f_2}_{F_2}^{\sigma_2}\right)^{s^{\prime}p}\sum_{\substack{Q \in \mathcal{S}^{\prime} \\ \pi_{\mathscr{F}_2}(Q)= F_2 }}\ave{\sigma_2}_Q^{s^{\prime}\eta}|Q|\bigg)^{\frac{1}{s^{\prime}p}}  \nonumber \\
& :=[\vec{w}]_{A_{\vec{P}}}^{\frac{1}{p}} J_1 \times J_2. \nonumber
\end{align}

Since $\mathcal{S}^{\prime}$ is sparse, for $J_1$, we have
\begin{align}
J_1 & \lesssim  \bigg(\sum_{F_2 \in \mathscr{F}_2 }\sum_{\substack{Q \in \mathcal{S}^{\prime} \\ \pi_{\mathscr{F}_2}(Q)= F_2 }}\ave{{\sigma_1}^{1+\varepsilon}}_Q^{s\gamma}|E_Q|\bigg)^{\frac{1}{sp}}  \nonumber \\
& \leqslant \Big(\int_{\widetilde{Q}}( M ({\sigma_1}^{1+\varepsilon}\chi_{\widetilde{Q}}))^{s\gamma}\ud x\Big)^{\frac{1}{sp}} \nonumber \\
& =\|M ({\sigma_1}^{1+\varepsilon}\chi_{\widetilde{Q}} )\|_{L^{s\gamma}\Big(\frac{\ud x}{|\widetilde{Q}|}\Big)}^{\frac{\gamma}{p}}|\widetilde{Q}|^{\frac{1}{sp}}  \nonumber \\
& \leqslant [\sigma_1]_{A_{\infty}}^{\frac{1}{sp}}\left\|M ({\sigma_1}^{1+\varepsilon}\chi_{\widetilde{Q}} )\right\|_{L^{1,\infty}\left(\frac{\ud x}{|\widetilde{Q}|}\right)}^{\frac{\gamma}{p}}|\widetilde{Q}|^{\frac{1}{sp}}  \nonumber \\
& \lesssim [\sigma_1]_{A_{\infty}}^{\frac{1}{sp}}\ave{{\sigma_1}^{1+\varepsilon}}_{\widetilde{Q}}^{\frac{\gamma}{p}}|\widetilde{Q}|^{\frac{1}{sp}} \lesssim [\sigma_1]_{A_{\infty}}^{\frac{1}{sp}}\ave{\sigma_1}_{\widetilde{Q}}^{\frac{1}{p_1}}|\widetilde{Q}|^{\frac{1}{sp}}.  \nonumber 
\end{align}

The third inequality in the above estimation is due to the Kolmogorov's inequality, that is, for any cube $Q$ in $\mathbb{R}^n$, $f \in L^{1,\infty}(Q)$, 
$$\|f\|_{L^{p}\left(\frac{\ud x}{|Q|}\right)} \leqslant \Big(\frac{1}{p}+\frac{1}{1-p}\Big)^{\frac{1}{p}}\|f\|_{L^{1,\infty}\left(\frac{\ud x}{|Q|}\right)},\quad  0 < p < \infty.$$
Specifically,
$$
\bigg(\frac{1}{s\gamma}+\frac{1}{1-s\gamma}\bigg)^{\frac{1}{sp}} = \bigg(\frac{1}{1-\frac{s}{2}\frac{\varepsilon}{1+\varepsilon}\frac{p}{p_1}}+\frac{2}{s}\frac{1+\varepsilon}{\varepsilon}\frac{p_1}{p}\bigg)^{\frac{1}{sp}} \lesssim [\sigma_1]_{A_{\infty}}^{\frac{1}{sp}}
$$
For $J_2$, using the same method as $J_1$ and (\ref{basic formula}), we obtain
\begin{align}
J_2 & \lesssim \Big(\sum_{F_2 \in \mathscr{F}_2 }\left(\ave{f_2}_{F_2}^{\sigma_2}\right)^{s^{\prime}p}[\sigma_1]_{A_{\infty}}\ave{\sigma_2}_{F_2}^{s^{\prime}\eta}|F_2|\Big)^{\frac{1}{s^{\prime}p}}\nonumber \\
& \leqslant [\sigma_1]_{A_{\infty}}^{\frac{1}{s^{\prime}p}} \Big(\sum_{F_2 \in \mathscr{F}_2}\left(\ave{f_2}_{F_2}^{\sigma_2}\right)^{p_2}\ave{\sigma_2}_{F_2}|F_2|\Big)^{\frac{1}{p_2}} \Big(\sum_{F_2 \in \mathscr{F}_2 }|F_2|\Big)^{\frac{1}{s^{\prime}p}-\frac{1}{p_2}}\nonumber \\
& \lesssim [\sigma_1]_{A_{\infty}}^{\frac{1}{s^{\prime}p}}\|f_2\|_{L^{p_2}\left(\sigma_2\right)}|\widetilde{Q}|^{\frac{1}{s^{\prime}p}-\frac{1}{p_2}}. \nonumber 
\end{align}
If we apply the reverse h\"{o}lder inequality for $\sigma_2$, we can obtain another bound similarly. Therefore, we get
$$I_1 \lesssim [\vec{w}]_{A_{\vec{P}}}^{1/p}\min\{[\sigma_1]_{A_{\infty}},[\sigma_2]_{A_{\infty}}\}^{1/p}\sigma_1(\widetilde{Q})^{1/p_1}\|f_2\|_{L^{p_2}\left(\sigma_2\right)}.$$
The estimation of $I_2$ is similar to $I_1$, only by replacing formula (\ref{key formula1}) with (\ref{key formula2}). By combining the above estimates of $I_1$ and $I_2$, we obtain the proof of the theorem.
\end{proof}

At the end of this section, we use Python to draw a graph to compare the weak type estimate we obtained with the sharp strong type estimate obtained by Li, Moen and Sun in \cite{LMS} mentioned in section 1. In particular, we show that when $p \geqslant \frac{3+\sqrt{5}}{2}$ or $\min\{p_1,p_2\} > 4$, the index we obtained is smaller than $1$.

\begin{figure}[htbp]
\centering
\includegraphics{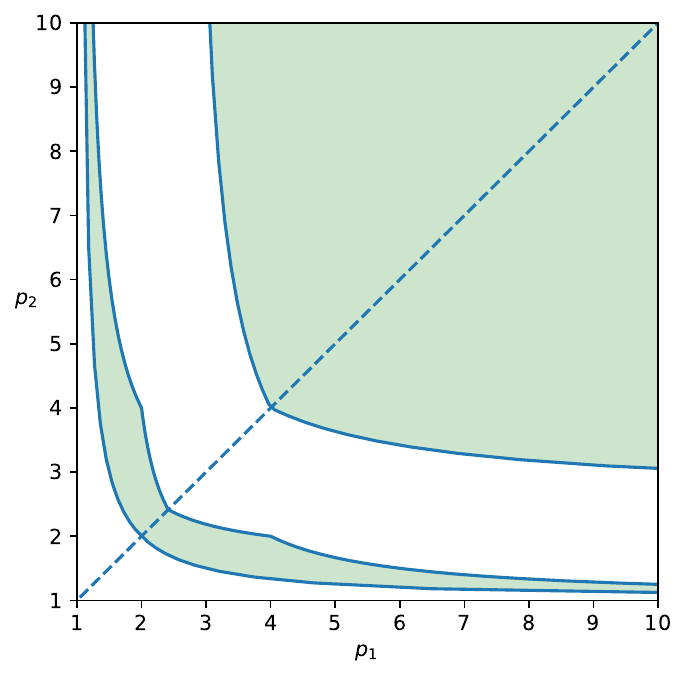}
\caption{Compared to the sharp strong type estimate, our results are better in shaded areas.}
\end{figure}

Without loss of generality, we assume $p_1 \leqslant p_2$ in the following caculations.

\begin{itemize} 
\item  When $p \geqslant \frac{3+\sqrt{5}}{2}$, it is obvious that $p_1^{\prime} \leqslant p$. In this case, the exponent in Theorem\ref{main theorem} is $\frac{1}{p} + \frac{1}{p_2^{\prime}}\frac{p_1^{\prime}}{p}$, if it is greater than or equal to $1$,we obtain 
$$
\frac{1}{p} + \frac{1}{p_2^{\prime}}\frac{p_1^{\prime}}{p} \geqslant 1 \,\Rightarrow\, \frac{p_1^{\prime}}{p_2^{\prime}} \geqslant p-1  \,\Rightarrow\, p_1^{\prime} \geqslant p-1 \,\Rightarrow\,  \frac{1}{p_1} \geqslant \frac{p-2}{p-1},
$$
Since $p \geqslant \frac{3+\sqrt{5}}{2}$, we have $\frac{p-2}{p-1} \geqslant \frac{1}{p}$, which leads to the contradiction.
\item  When $\min\{p_1,p_2\} > 4$, we can also obtain $p_1^{\prime} \leqslant p$, thus,
$$
\frac{1}{p} + \frac{1}{p_2^{\prime}}\frac{p_1^{\prime}}{p} = \frac{p_1^{\prime}}{p}\Big(2-\frac{1}{p}\Big) < 1  \,\Leftrightarrow\, \frac{2}{p}-\frac{1}{p^2} < \frac{1}{p_1^{\prime}},
$$
and this holds automatically since the left hand side is always less than $\frac{3}{4}$, while the right hand side is greater than it.
\end{itemize} 

\section{Acknowledgements}
The author thanks Professor Kangwei Li for suggesting this project, carefully reading the manuscript and providing many valuable suggestions, which greatly improve the quality of this article. Thanks also to Professor Sheldy Ombrosi for making some helpful comments on the paper.
\bibliographystyle{plain}
\bibliography{ref}

\end{document}